\def\Z{{\mathbb Z}}
\def\SL{{\rm SL}}
\def\GL{{\rm GL}}
\def\Disc{{\rm Disc}}
\def\ndeg{{\rm ndeg}}
\def\deg{{\rm deg}}
\def\R{{\mathbb R}}
\def\F{{\mathbb F}}
\def\FF{{\mathcal F}}
\def\Q{{\mathbb Q}}
\def\Z{{\mathbb Z}}
\def\F{{\mathbb F}}
\def\Q{{\mathbb Q}}
\def\N{{\mathbb N}}
\def\e{{\epsilon}}
\newtheorem{theorem}{Theorem}
\newtheorem{lemma}[theorem]{Lemma}
\newenvironment{proof}{\noindent {\bf Proof:}}{$\Box$ \vspace{2 ex}}
\title{Counting $S_5$-fields with a power saving error term}
\author{Arul Shankar and Jacob Tsimerman}
\begin{document}
\maketitle
\begin{abstract} We show how the Selberg $\Lambda^2$-sieve can be used to obtain power saving error terms
in a wide class of counting problems which are tackled using geometry of numbers. Specifically, we give such an
error term for the counting function of $S_5$-quintic fields.
\end{abstract}
\section{Introduction}
Over the past decade there has emerged a large body of work concerned
with counting arithmetic objects by parameterizing them as $G_{\Z}$
orbits on $V_{\Z}$, where $G$ is some reductive algebraic group, and
$V$ is a representation of $G$ (see \cite{dodqf}, \cite{dodpf},
\cite{Bhyper}, \cite{BG3} \cite{BS} \cite{BS5}, \cite{SW}). In certain
applications, particularly relating to low lying zeroes--see \cite{Y},
it is important to not only obtain the asymptotic count, but also to
obtain a power saving error term. i.e. a formula of the
type $$\#\{\textrm{Objects of interest with height less than X} \}=
cX^a\log^b X + O(X^{a-\delta})$$ for some fixed constant $\delta>0$.

In this note, we show how the Selberg $\Lambda^2$-sieve can be used
very generally to obtain such power savings. In particular, we
demonstrate our claim by obtaining the first known power saving for
quintic fields with galois group $S_5$ and bounded discriminant:
\begin{theorem}\label{main}
Define $N_5^{(i)}(X)$ to be the number of quintic fields with galois group $S_5$ having discriminant bounded in absolute value by $X$ with $i$ complex places.
Then $$N_5^{(i)}(X) = d_i\prod_p(1+p^{-2}-p^{-4}-p^{-5})X + O_{\e}(X^{\frac{399}{400}+\e})$$
where $d_0,d_1,d_2$ are $\frac{1}{240},\frac{1}{24},$ and $\frac{1}{16}$  respectively.
\end{theorem}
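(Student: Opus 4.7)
The strategy is to use Bhargava's parameterization of quintic rings by $G_\Z$-orbits on $V_\Z := \Z^4 \otimes \wedge^2\Z^5$, where $G = \GL_4 \times \GL_5$ (as set up in \cite{BS5}), together with the Selberg $\Lambda^2$-sieve to isolate those orbits that correspond to maximal orders. Under this parameterization, quintic fields with Galois group $S_5$ and $i$ complex places correspond, up to the combinatorial factor $d_i$, to $G_\Z$-orbits whose associated quintic ring is a maximal order in a number field; the contribution from non-$S_5$ fields (i.e.\ those whose Galois closure contains a nontrivial subfield) is of strictly smaller order of magnitude and handled by the usual tail estimates. The main term of the theorem itself was already obtained in \cite{BS5}; the content of our claim is the power-saving error term.

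The first main step is to establish, in each real component $V_\R^{(i)}$ and for each squarefree integer $q$, a count of the shape
\[ N_q^{(i)}(X) \;=\; c^{(i)}(q)\,X \;+\; O\!\left(X^{1-\delta}\,q^{A}\right) \]
for the number of $G_\Z$-orbits with bounded discriminant $|\disc|\le X$ that are non-maximal at every prime dividing $q$, with $\delta, A > 0$ explicit. This amounts to a uniform refinement of the geometry-of-numbers argument of \cite{BS5}, in which the main technical issue is bounding the reducible cuspidal contribution to the fundamental domain uniformly in $q$. The local factors satisfy $c^{(i)}(q) = d_i \prod_{p\mid q}c_p$, where $c_p$ is the mass of the non-$p$-maximal locus in $V_{\Z_p}$ relative to the total mass, and after the sieving of Step~2 the complementary Euler product $\prod_p(1+p^{-2}-p^{-4}-p^{-5})$ emerges exactly as in \cite{BS5}.

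The second main step is to apply the Selberg $\Lambda^2$-sieve with level $D = X^{\alpha}$ for a well-chosen $\alpha>0$: the optimized weights $\lambda_d$ supported on squarefree $d\le D$ furnish an upper bound $N_5^{(i)}(X) \le d_i\prod_p(1+p^{-2}-p^{-4}-p^{-5})\,X + O_\e(X^{1-\delta'+\e})$, and a matching lower bound is obtained either from a lower-bound variant of the sieve or by combining the $\Lambda^2$ upper bound with a truncated inclusion-exclusion whose tail is itself controlled by the same upper-bound sieve. Balancing the input error $X^{1-\delta}D^A$ against the sieve remainder and optimizing $\alpha$ yields the final exponent $\tfrac{399}{400}$. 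The main obstacle will be proving the cuspidal orbit estimate uniformly in $q$ with polynomial dependence $q^A$: the standard Davenport-style cusp analysis does not automatically yield error terms that are uniform under arbitrary congruence conditions mod $q$, so one must carefully bound integer points in expanding regions of reducible loci intersected with congruence sublattices, and the admissible values of $\delta$ and $A$ obtained there ultimately dictate the precise value of $\alpha$ and hence the precise power saving appearing in the theorem.
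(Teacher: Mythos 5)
There is a genuine gap, and it lies in where you point the Selberg sieve. You apply the $\Lambda^2$-sieve to the maximality conditions and dismiss the non-$S_5$ locus as being ``of strictly smaller order of magnitude and handled by the usual tail estimates,'' but the situation in the paper is exactly reversed. The set $V_\Z^{\deg}$ of points corresponding to orders in non-$S_5$ algebras is precisely where no power saving was previously known: the mod $p$ obstructions to degeneracy have densities bounded away from $0$ and $1$ (the splitting-type sets $T_p(1112)$ and $T_p(5)$ of \cite[Section 3.2]{dodpf} have densities tending to $1/12$ and $1/5$), so the classical argument of intersecting any fixed finite set of congruence conditions yields only $o(X)$, never $O(X^{1-\delta})$. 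The entire point of the paper is to make this step quantitative: it feeds the uniform congruence estimate $N^*(\{x\in L\cap V_\Z^{(i)}:a_{12}\neq 0\},X)=c_im^{-40}X+O(m^{-39}X^{\frac{39}{40}})$ for $L$ a translate of $mV_\Z$ (equation \eqref{eq2}, from \cite{dodpf}) into the Selberg sieve \cite[Theorem 6.4]{IK} against the conditions $T_p(5)$ and $T_p(1112)$, with level $D=X^{\frac{1}{100}}$, obtaining $N^*(V_\Z^{\deg,(i)},X)\ll_\e X^{\frac{199}{200}+\e}$. Your sketch produces no power-saving bound whatsoever for this locus, and without one the error term $X^{\frac{399}{400}+\e}$ is unreachable. (A minor but telling symptom: the parameterization and main term are from \cite{quintic} and \cite{dodpf}, not \cite{BS5}, which concerns $5$-Selmer groups of elliptic curves.)

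For the maximality sieve itself, the Selberg sieve is both unnecessary and insufficient as you deploy it. Since the density $c_d$ of the non-maximal-at-$d$ locus $W_d$ satisfies $c_d=O_\e(d^{-2+\e})$, a truncated inclusion-exclusion over $d<T$ following \cite{BBP} already gives the main term with a power saving; no lower-bound variant of the sieve is needed. What is indispensable there is a tail bound valid for \emph{all} $d$, namely $N(W_d,X)=O_\e(X/d^{2-\e})$ (Lemma \ref{uniform}), which rests on Brakenhoff's count \cite{B} of suborders of index $m$ in a maximal quintic ring together with the $O(n^6)$ bound on sextic resolvents of a ring of content $n$ --- an algebraic overring-counting input that neither the sieve nor congruence uniformity can supply. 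Your estimate $O(X^{1-\delta}q^A)$ is vacuous once $q>X^{\delta/A}$, and your suggestion that the inclusion-exclusion tail is ``controlled by the same upper-bound sieve'' cannot work: non-maximality must be excluded at primes as large as $X^{1/2}$, far beyond any sieve level $D=X^\alpha$ that your remainder terms tolerate. With the two missing pieces in place --- the degenerate-locus bound \eqref{finest} and Lemma \ref{uniform} --- the paper's final optimization balances $X/T^{1-\e}$, $X^{\frac{39}{40}}T^{3+\e}$, and $X^{\frac{199}{200}}T^{1+\e}$ at $T=X^{\frac{1}{400}}$ to yield the exponent $\frac{399}{400}$; your proposal as written cannot reach that balance.
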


 We begin with a general sketch of the argument.

\subsection{Sketch of the argument}
Typically, one finds a fundamental domain $F\subset V_{\R}$ for the
action of $G_{\R}$, and one wants to count integral points inside $F$
of bounded height. However, it is not all points that one wants to
count; one partitions the set $V_{\Z}$ into 2 sets $V_{\Z}^{\deg}$ and
$V_{\Z}^{\ndeg}$ where former sets corresponds to objects which are
`degenerate' in some way, and it is only the points in $V_{\Z}^{\ndeg}$
that need to be counted. For example, in the quintic case the
degenerate points correspond to quintic rings $R$ such that
$R\otimes_{\Z}\Q$ is not a quintic field with galois group $S_5$. $F$
is typically not compact and has `cusps' which contain primarily
degenerate points; the method which one uses to estimate the number of
nondegenerate points in the cusp typically yields a power
saving. Letting $F_0(X)$ be the set of points of $F_0$ of height at
most $X$, it then follows that
$$|V_{\Z}\cap F_0(X)|=cX^a\log^b X + O(X^{a-\delta}).$$ It remains to
estimate the number of degenerate points inside the main body
$F_0\subset F$, and it is in this last estimate that past results have
frequently failed to obtain a power saving.

The typical argument runs as follows: the reduction modulo a prime $p$
of $V_{\Z}^{\deg}$ is shown to lie in a subset $B_p\subset V_{\F_p}$ of
density $\mu_p$, which approaches a constant $c$ between $0$ and $1$
as $p\rightarrow\infty$. Set $\widetilde{B}_p$ to be the set of
elements of $V_{\Z}$ reducing to $B_p$. For any finite fixed set $S$
of primes, one has the estimate
$$|V^{\deg}_{\Z}\cap F_0(X)|\leq |\bigcap_{p\in S}\widetilde{B}_p\cap
F_0(X)|\sim \prod_{p\in S} \mu_p\cdot cX^a\log^b X$$ This is true for
every fixed $S$. Since $\prod_{p\in S}\mu_p$ can be made arbitrarily
small by picking $S$ to be a large set, one obtains
$$
|V^{\deg}_{\Z}\cap F_0(X)|=o(X^a\log^b X).
$$

However it is possible to do much better by estimating $|\bigcap_{p\in S}
\widetilde{B}_p|$ with the Selberg sieve \cite[Theorem 6.4]{IK}. To
apply this sieve, we need the following uniform statement: let $L\subset V_\Z$ be defined by congruence conditions modulo $m$. Then
$$
|L\cap F_0(X)|=\mu(L)cX^a\log^b X +O(X^{a-\delta}m^A),
$$ where $\mu(L)$ denotes the density of $L$ in $V_\Z$, and $A$ is a
fixed constant independent of $L$. The application of the Selberg
sieve immediately yields a power saving error term:
$$
|V^{\deg}_{\Z}\cap F_0(X)|=O_\epsilon(X^{a-\frac{\delta}{2A+3}+\epsilon}).
$$

We remark that for arithmetic applications one usually needs a further sieve (for
example, a sieve from quintic rings to maximal quintic rings). This
can be done with a power saving error term following \cite{BBP}.

\subsection{Outline of the paper}
In \S2, we collect the arguments used by Bhargava in \cite{dodpf} to
parametrize and count the number of quintic rings of a bounded discriminant. In \S3 we use the Selberg Sieve to obtain
a power saving estimate for the number of non $S_5$-orders - what we call $V_{\Z}^{\deg}$. We try to adhere to the notation
of \cite[Theorem 6.4]{IK} for the convenience to the reader. In section 4 we prove our main theorem by sieving down from $S_5$-orders to
$S_5$-fields.

\section{$S_5$-quintic orders}
In this section, we recall results from \cite{dodpf} that allow us to
obtain asymptotics for the number of $S_5$-quintic orders having
bounded discriminant. All the results and the notation in this section
directly follow \cite{dodpf}.

\subsection{Parametrizing quintic rings}
Let $V_\Z$ denote the space of quadruples of $5\times 5$-alternating
matrices with integer coefficients. The group
$G_\Z:=\GL_4(\Z)\times\SL_5(\Z)$ acts on $V_\Z$ via $(g_4,g_5)\cdot
(A,B,C,D)^t=g_4(g_5Ag_5^t,g_5Bg_5^t,g_5Cg_5^t,g_5Dg_5^t)^t$. The ring
of invariants for this action is generated by one element, denoted the
discriminant. In \cite{quintic}, Bhargava shows that quintic rings are
parametrized by $G_\Z$-orbits on $V_\Z$:

\begin{theorem}[Bhargava]
There is a canonical bijection between the set of $G_\Z$-orbits on
elements $(A,B,C,D,E)\in V_\Z$ and the set of isomorphism classes of
pairs $(R,R')$, where $R$ is a quintic ring and $R'$ is a sextic
resolvent of $R$. Under this bijection, we have
$\Disc(A,B,C,D,E)=\Disc(R)=\frac{1}{16}\Disc(R')^{1/3}$.
\end{theorem}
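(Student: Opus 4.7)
The plan is to exhibit the bijection by explicit construction in both directions, verify it is $G_\Z$-equivariant, and then check the discriminant identity on a Zariski-dense locus. First, I would go from the representation to rings: given $(A,B,C,D) \in V_\Z$, form the pencil $M(\mathbf{x}) = x_1 A + x_2 B + x_3 C + x_4 D$ as a $5\times 5$ alternating matrix over $\Z[x_1,x_2,x_3,x_4]$. Deleting the $i$th row and column of $M(\mathbf{x})$ and taking the Pfaffian yields a quadratic form $Q_i$ in the $x_j$, giving five quadratic forms in four variables; equivalently, a linear map $\phi \colon \Z^5 \to \Sym^2(\Z^4)^*$. This is the raw data from which one reads off the multiplication table of a quintic ring $R$ (with $R/\Z \cong \Z^4$) and its sextic resolvent $R'$ (with $R'/\Z \cong \Z^5$): the five $Q_i$ correspond to the five "resolvent quadratic forms" that, together with the natural wedge pairings in $R'/\Z$, determine the required structure constants.

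Second, I would construct the inverse map. Given $(R,R')$, choose $\Z$-bases $1,\alpha_1,\dots,\alpha_4$ of $R$ and $1,\beta_1,\dots,\beta_5$ of $R'$, with the latter chosen so that $\beta_1\wedge\cdots\wedge\beta_5$ is a fixed generator of $\wedge^5(R'/\Z) \cong \Z$ (this is what pins down $\SL_5$ rather than $\GL_5$). The sextic resolvent structure provides a canonical equivariant map $\Sym^2(R'/\Z) \to R/\Z$, which unpacks as four symmetric bilinear forms on $R'/\Z$ with values in $\Z$; by antisymmetrizing against the canonical skew pairing coming from the Pfaffian structure on the resolvent, these become four alternating $5\times 5$ matrices $A,B,C,D$. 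I would then verify that this construction and the one above are mutually inverse, and that on both sides the $\GL_4$-action matches arbitrary change of basis on $R/\Z$ while $\SL_5$ matches oriented change of basis on $R'/\Z$.

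Finally, both $\Disc(A,B,C,D)$ and $\Disc(R)$ are polynomial invariants of the appropriate bi-degree in the $40$ coordinates, so their proportionality is forced up to a constant, which can be pinned down on a single explicit example (e.g.\ $R = \Z[x]/(f(x))$ for a generic monic quintic $f$, with $R'$ its Cayley sextic resolvent). The identity $\Disc(R) = \tfrac{1}{16}\Disc(R')^{1/3}$ follows from the classical relation between a quintic and its sextic resolvent over $\Q$, then extended integrally. The main obstacle is associativity of the multiplication defined on $R$ and $R'$ from the raw data $(A,B,C,D)$: this is a system of universal polynomial identities among the $40$ matrix entries, and verifying it in a basis-free fashion is the technical heart of the proof. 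Bhargava's approach exploits the exceptional embedding $S_5 \hookrightarrow S_6$, reading the associativity off the interplay between the quintic and its resolvent rather than through direct calculation, and it is this conceptual input that makes the construction work.
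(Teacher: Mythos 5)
Two preliminary observations. First, the paper you were asked to match contains no proof of this statement: it is imported wholesale, as a black box, from Bhargava's \emph{Higher composition laws IV} \cite{quintic}, so the only meaningful comparison is with the proof there. (Incidentally, the ``$(A,B,C,D,E)$'' in the displayed statement is a slip of the present paper -- $V_\Z$ consists of \emph{quadruples} $(A,B,C,D)$ of $5\times 5$ alternating matrices, a $40$-dimensional space.) Measured against \cite{quintic}, your forward direction is faithful in outline: the five $4\times 4$ sub-Pfaffians $Q_1,\dots,Q_5$ of the pencil $x_1A+x_2B+x_3C+x_4D$ are indeed the quadratic forms from which the multiplication tables of $R$ and $R'$ are reconstructed, and your invariant-theoretic treatment of the discriminant identity (the ring of invariants is generated by the single degree-$40$ discriminant, so any invariant of the correct degree is a constant multiple of it, the constant being computable on one explicit example such as a Cayley resolvent of $\Z[x]/(f)$) is essentially the right mechanism for fixing the normalizations $1$ and $\tfrac1{16}$.

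There are, however, two genuine gaps. The first is that your inverse construction is wrong as written. In Bhargava's definition a sextic resolvent is not merely a ring $R'$ but a pair $(R',g)$, where the resolvent map $g$ is a \emph{linear} map $R/\Z\to\wedge^2(R'/\Z)$ (suitably twisted by $\wedge^5(R'/\Z)$, which is what makes $\SL_5$ rather than $\GL_5$ act); the quadruple $(A,B,C,D)$ is then literally the coordinate matrix of $g$ in the chosen bases, and the $Q_i$ arise by composing $g$ with the canonical quadratic map $v\mapsto v\wedge v\colon \wedge^2(R'/\Z)\to\wedge^4(R'/\Z)\cong(R'/\Z)^*\otimes\wedge^5(R'/\Z)$. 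There is no ``canonical equivariant map $\Sym^2(R'/\Z)\to R/\Z$'' in the data, and your proposed recipe -- antisymmetrizing four \emph{symmetric} bilinear forms against a fixed skew pairing -- would output the zero matrices; moreover, since you suppress $g$ from the data, the bijection you describe is not even well-posed (isomorphisms of pairs must intertwine the resolvent maps). The second gap is that the actual substance of Bhargava's theorem is exactly what you defer: that the structure constants read off from an \emph{arbitrary} integral quadruple, including highly degenerate ones, satisfy the associativity identities (proved in \cite{quintic} by reduction to the nondegenerate locus via Zariski density and closure arguments valid over $\Z$), and, for surjectivity, that \emph{every} quintic ring admits a sextic resolvent -- a delicate explicit construction in which the content of the ring plays a role, as it does in Lemma \ref{uniform} of the present paper. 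Naming this ``the technical heart'' is accurate but does not supply it; as it stands the proposal is a partially correct roadmap of one direction of the construction, not a proof.
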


\subsection{Counting quintic rings}
Let $V_\Z^\ndeg$ denote the set of elements in
$V_\Z$ that correspond to orders in
$S_5$-fields, and let $V_\Z^\deg$ be $V_\Z\backslash V_\Z^\ndeg$.
For a $G_\Z$-invariant subset $S$ of $V_\Z$, let $N(S,X)$ denote the
number of irreducible $G_\Z$-orbits on $S^\ndeg:=S\cap V_\Z^\ndeg$
having discriminant bounded by $X$.\footnote{In \cite{dodpf},
  Bhargava defined $N(S,X)$ slightly differently to also count orders
  in non $S_5$-fields.}


The quantity $N(V_\Z;X)$ is estimated in the following way: the action
of $G_\R$ on $V_\R$ has three open orbits denoted $V_\R^{(0)}$,
$V_\R^{(1)}$, and $V_\R^{(2)}$. Let $\FF$ be a fundamental domain for
the action of $G_\Z$ on $G_\R$ and let $H$ be an open bounded set in
$V_\R^{(i)}$. Denote $V_\Z\cap V_\R$ by $V_\Z^{(i)}$, and let
$S\subset V_\Z^{(i)}$ be a $G_\Z$-invariant subset. Then by
\cite[Equations (9) and (10)]{dodpf}, we have
\begin{equation}\label{eq1}
\begin{array}{rcl}
N(S,X)&=&\displaystyle\frac{\displaystyle\int_{v\in H}\#\{x\in \FF
  v\cap
  S^\ndeg:|\Disc(x)<X\}|\Disc(v)|^{-1}dv}{n_i\displaystyle\int_{v\in
    H}|\Disc(v)|^{-1}dv}\\[0.2in]
&=&C_i\displaystyle\int_{g\in\FF}\#\{x\in g H\cap
S^\ndeg:|\Disc(x)<X|\}dg,
\end{array}
\end{equation}
where $dg$ is Haar-measure on $G_\R$, $n_i$ depends only on $i$, and
$C_i$ is independent of $S$.  In what follows, we pick $\FF$ and $dg$
as in \cite[Section 2.1]{dodpf}. Once picked, we let \eqref{eq1}
define $N(S,X)$ even for sets $S$ that are not $G_\Z$-invariant.
Define also the related quantity $N^*(S,X)$ via
$$ N^*(S,X):=C_i\displaystyle\int_{g\in\FF}\#\{x\in g H\cap
S:|\Disc(x)<X|\}dg.
$$
Letting $a_{12}$ denote the $12$-coordinate of $A$, \cite[Lemma 11]{dodpf} states that we have
$$
N(\{x\in V^{(i)}_\Z:a_{12}=0\},X)=O(X^{\frac{39}{40}}).
$$
Proposition 12 combined with the last equation in Section 2.6 of \cite{dodpf} imply that
\begin{equation}\label{cutoff}
 N^*(\{x\in V_\Z^{(i)}:a_{12}\neq
0\},X)=
c_iX+O(X^{\frac{39}{40}}),
\end{equation}
where $$c_i:=\frac{\zeta(2)^2\zeta(3)^2\zeta(4)^2\zeta(5)}{2n_i}.$$

To sieve down to fields, we will need analogous equations where
$V_\Z^{(i)}$ is replaced by a set defined by finitely many congruence
conditions on $V_Z$. Specifically, if $L$ is a translate of $mV_\Z$, then from \cite[Equation 28]{dodpf} we have
\begin{equation}\label{eq2}
N^*(\{x\in L\cap V_\Z^{(i)}:a_{12}\neq 0\},X)=c_im^{-40}X+O(m^{-39}X^{\frac{39}{40}}).
\end{equation}

\subsection{Congruence conditions for $V_\Z^\deg$}\label{badcong}
As explained in \cite[Section 3.2]{dodpf}, there exist disjoint
subsets $T_p(1112)$ and $T_p(5)$ of $V_\Z$, that are defined by
congruence conditions modulo $p$, such that for any two distinct primes $p$ and
$q$, the set $V_{\Z}^{\deg}$ is disjoint from $T_p(1112)\cap
T_q(5)$. Furthermore, the densities $g_p(1112)$ of $T_p(1112)$ and
$g_p(5)$ of $T_p(5)$ approach $1/12$ and $1/5$, respectively as
$p\to\infty$.  We set $S_p(1112)$ and $S_p(5)$ be the complements of
$T_p(1112)$ and $T_p(5)$ respectively.

\section{Applying the Selberg Sieve}

In this section we give a power saving estimate for
$N^*(V^{\deg,(i)}_{\Z},X)$. By section \ref{badcong}, we know that
\begin{equation}\label{upest}
N^*(V^{\deg,(i)}_{\Z},X)\leq N^*(\cap_p S_p(5),X)  + N^*(\cap_p S_p(1112),X).
\end{equation}

Our goal is to bound each of the two terms on the RHS of \eqref{upest}
using the Selberg Sieve. We turn to the details.  We being by fixing a
number $z<X$. Set $P(z)=\prod_{p<z} p.$ For each square-free number
$d|P(z)$, set $g_d(5)=\prod_{p\mid d}g_p(5)$
and $$a_d(5)=N^*\left(\bigcap_{p\mid d}T_p(5)\bigcap_{p\mid
  \frac{P(z)}{d}}S_p(5),X\right).$$ This is a sequence of non-negative
integers, and by \eqref{eq2} we have that for all $d\mid P(z)$,
\begin{equation}\label{congsieve}
\sum_{n\equiv 0\textrm{ mod } d} a_n(5) = N^*(\cap_{p\mid d}T_p(5),X) = c_ig_d(5)X+r_d
\end{equation}
where $r_d=O(dg_d(5)X^{\frac{39}{40}})$.
Fix $D>1$ and
define
$$h_d(5)=\prod_{p\mid d}\frac{g_p(5)}{1-g_p(5)},\quad\quad H=\sum_{\substack{d<\sqrt{D}\\d\mid P(z)}}h_d(5).$$
A direct application of \cite[Theorem
  6.4]{IK} yields
\begin{equation}\label{mainest}
a_1(5)= \sum_{(n,P(z))=1}a_n(5)\leq
C_iXH^{-1} +
O\left(\sum_{d<D,d\mid P(z)} \tau_3(d) r_d\right).
\end{equation}

To use \eqref{mainest} we take $z\rightarrow\infty$. Note that since
$g_p(5)\rightarrow\frac{1}{5}$, we have
$$d^{-\epsilon}\ll_\e
g_d(5),h_d(5)\ll_\e d^{\epsilon}.$$ It follows that $H = D^{\frac12+o(1)}$
while
$$\left|\sum_{d<D,d\mid P(z)} \tau_3(d) r_d\right|\ll_\e
X^{\frac{39}{40}}D^{\epsilon}\sum_{d<D}d\leq X^{\frac{39}{40}}D^{2+\epsilon}.$$

We deduce that $a_1(5)\ll_\epsilon XD^{-\frac12+\epsilon} +
X^{\frac{39}{40}}D^{2+\epsilon}$. Optimizing, we take $D=X^{\frac{1}{100}}$ to deduce
that $a_1(5)\ll_{\epsilon} X^{\frac{199}{200}+\epsilon}$.

It follows that $$N^*(\cap_p S_p(5),X)\leq N^*(\cap_{p<z}
S_p(5),X)=a_1(5)\ll_{\epsilon} X^{\frac{199}{200}+\epsilon}.$$ The case of
$N^*(\cap_p S_p(1112),X)$ can be treated similarly, and we thus
conclude by \eqref{upest} that
\begin{equation}\label{finest}
N^*(V^{\deg,(i)}_{\Z},X)\ll_{\epsilon} X^{\frac{199}{200}+\epsilon}.
\end{equation}
\section{Sieving to fields}
In this section we follow \cite{BBP} to prove Theorem \ref{main}.  For $d$ square-free,
define $W_d\subset V_Z$ to be the set of elements corresponding to
quintic orders that are not maximal at each prime dividing $d$. Recall from
\cite{dodpf} that $W_d$ is defined by congruence conditions modulo $d^2$.

We need a slight generalization of the uniformity estimate
\cite[proposition 19]{dodpf}.

\begin{lemma}\label{uniform}
$N(W_d,X) = O_{\epsilon}(X/d^{2-\epsilon})$
\end{lemma}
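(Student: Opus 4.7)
The plan is to imitate the argument of \cite[Proposition 19]{dodpf}, which treats the single-prime case $d=p$, while tracking dependence on the modulus $d^2$ uniformly. By definition $W_d$ is defined by congruence conditions modulo $d^2$; Bhargava's local mass formula gives that the $p$-adic density of $W_p$ in $V_{\Z_p}$ is $O(1/p^2)$, so multiplicatively $\mu(W_d) = O_\epsilon(d^{-2+\epsilon})$, and $W_d$ is a disjoint union of $O_\epsilon(d^{78+\epsilon})$ translates of $d^2 V_\Z$ inside $V_\Z$ (recall $\dim V = 40$).

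I would split the count $N(W_d,X)$ according to whether $a_{12}=0$. For the main-body piece $a_{12}\neq 0$, applying \eqref{eq2} with $m=d^2$ to each of the translates and summing yields
$$N^*\bigl(W_d\cap\{a_{12}\neq 0\}\cap V_\Z^{(i)},X\bigr)\ll_\epsilon \frac{X}{d^{2-\epsilon}} + d^{\epsilon}X^{\frac{39}{40}},$$
where the main terms contribute $c_i\mu(W_d)X$ and the errors sum to $O_\epsilon(d^{78+\epsilon}\cdot d^{-78}X^{39/40})$. For the cusp piece $a_{12}=0$, I would revisit the argument of \cite[Lemma 11]{dodpf} and average Davenport's lattice-point estimate over the cosets comprising $W_d$, hoping to extract a factor of $\mu(W_d) = O_\epsilon(d^{-2+\epsilon})$ and obtain $O_\epsilon(X^{39/40}/d^{2-\epsilon})$.

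Combining the two pieces gives the lemma whenever $d\leq X^{1/80}$, as then the main term $X/d^{2-\epsilon}$ dominates the $d^{\epsilon}X^{39/40}$ error. For larger $d$, the target bound $X/d^{2-\epsilon}$ is itself smaller than $X^{39/40}$, and I would argue more directly. For squarefree $d$, any $R\in W_d\cap V_\Z^\ndeg$ admits a canonical overring $R'\supset R$ that is maximal at each $p\mid d$ with $[R':R]=d$; hence $|\disc(R)| = d^2|\disc(R')|$, so $R'$ has discriminant at most $X/d^2$. Using the crude Schmidt-type bound of $O(X/d^2)$ on the number of quintic orders of discriminant at most $X/d^2$ together with a local count bounding the number of index-$d$ suborders of a fixed quintic order landing in $W_d$ by $O_\epsilon(d^\epsilon)$, we recover $N(W_d,X)\ll_\epsilon X/d^{2-\epsilon}$ in this range as well.

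The main obstacle I anticipate is the uniform cusp estimate for $W_d$: showing that the saving by a factor $d^{-2+\epsilon}$ really persists in the $a_{12}=0$ region, uniformly in $d$. This amounts to rerunning the geometry-of-numbers analysis of \cite[Section 2]{dodpf} with an arbitrary sublattice of index $d^{80}$ in place of $V_\Z$, which is technical but directly parallel to the uniformity argument that underlies \cite[Proposition 19]{dodpf}.
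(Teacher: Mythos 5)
Your proposal takes a genuinely different route from the paper, and the comparison is unflattering: the paper's proof is a single overring-counting argument, uniform in all $d$, with no geometry of numbers and no case split. Every $R\in W_d\cap V_\Z^\ndeg$ lies in a maximal quintic ring with index $m$ satisfying $p\mid m$ for each $p\mid d$; combining Brakenhoff's bound $\prod_{p^k\mid\mid m}O(p^{\min(2k-2,20k/11)})$ on the number of index-$m$ suborders of a maximal quintic ring, the bound $O(n^6)$ on the number of sextic resolvents of a ring of content $n$ (played off against the factor $n^8$ in the discriminant), and the linear-in-$X$ count of quintic rings, the paper gets $N(W_d,X)\ll_\e d^\e X\sum_{n}n^{6-8}\prod_{p\mid d}\sum_{k\ge1}p^{\min(2k-2,20k/11)-2k}\ll_\e X/d^{2-\e}$ in one stroke. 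Measured against this, your large-$d$ branch contains a genuine error: the claim that any $R\in W_d$ admits a canonical overring $R'$, maximal at each $p\mid d$, with $[R':R]=d$, is false. The index of $R$ in its maximalization at the primes dividing $d$ is $\prod_{p\mid d}p^{k_p}$ with each $k_p\ge 1$ but otherwise unbounded; there is in general no overring simultaneously maximal at those primes and of index exactly $d$. One can choose an overring of index exactly $d$, but it is then not maximal, and Brakenhoff's theorem as you invoke it counts suborders of \emph{maximal} rings only, so your $O_\e(d^\e)$ bound on index-$d$ suborders does not apply to it. Repairing this forces a sum over all admissible indices $m$ --- at which point you have reconstructed the paper's proof, which covers all $d$ at once and renders your range split and geometry-of-numbers branch superfluous. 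Two further inaccuracies in the same branch: the $O(Y)$ count of quintic orders of discriminant at most $Y$ is not a ``crude Schmidt-type bound'' (Schmidt gives only $O(Y^{7/4})$ in degree $5$) but is Bhargava's theorem itself, available here via \eqref{cutoff} and the cusp estimate; and since $N$ counts pairs (quintic ring, sextic resolvent), you must weight by resolvent multiplicity, which is exactly where the content sum $\sum_n n^6/n^8$ enters the paper's display and which your sketch omits.

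Your small-$d$ branch rests on a step you yourself flag as unresolved --- the uniform cusp saving $O_\e(X^{39/40}/d^{2-\e})$ --- so as written it is a hole; but ironically it is an unnecessary one. Your main-body computation is correct: applying \eqref{eq2} with $m=d^2$ over the $O_\e(d^{78+\e})$ cosets gives main term $O_\e(X/d^{2-\e})$ and total error $O_\e(d^\e X^{39/40})$, so this branch is only ever used for $d\le X^{1/80}$. In that range $X/d^2\ge X^{39/40}$, so the \emph{non-uniform} cusp bound $N(\{x\in V_\Z^{(i)}:a_{12}=0\},X)=O(X^{39/40})$, quoted in the paper from \cite[Lemma 11]{dodpf}, already meets the target with no $d$-saving at all. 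So the obstacle you identify as the main one can be dissolved trivially, while the step you treat as routine --- the large-$d$ overring count --- is where the actual gap lies, and closing it correctly leads you back to the paper's argument.
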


\begin{proof}
As in \cite[proposition 19]{dodpf} we count rings that are not maximal
by counting their over-rings. As in that proof, we use the result of
Brakenhoff \cite{B} that the number of orders having index $m$ in a
maximal quintic ring $R$ is $\prod_{p^k\mid\mid m}
O(p^{\min(2k-2,\frac{20k}{11})}).$ Moreover, the number of sextic
resolvents of a quintic ring of content $n$ is $O(n^6)$. (Recall that
the content of a ring is the largest integer $n$ such that $R=\Z+nR'$
for some quintic ring $R'$.)

Thus we have that $$N(W_d,X)\ll_\e d^\e X\sum_{n=1}^\infty \frac{n^6}{n^8}
\prod_{p\mid d}\sum_{k=1}^{\infty}
\frac{p^{\min(2k-2,\frac{20k}{11})}}{p^{2k}}
\ll_{\e}X/d^{2-\e}$$ as desired.
\end{proof}

Define $$N^*_{12}   (S,X) = N^*(\{x\in S: a_{12}\neq 0\}, X).$$
Now, a point in $V_{\Z}$ corresponds to a maximal order in an $S_5$-field precisely if it is in
$\cap_p W_p\cap V_{\Z}^{\ndeg}$.  Denote the density of $W_d$ by $c_d$, and recall from \cite{dodpf} that $c_d=O_{\e}(d^{-2+\e})$. We have

\begin{align*}
N(\cap_p W_p\cap V_{\Z}^{(i)}, X) &= \sum_{d\in\N} \mu(d)N(W_d,X)\\ &=
\sum_{d<T} \left(c_i\mu(d)c_d X + O(X^{\frac{39}{40}}d^2)-\mu(d)N^*_{12}(W_d\cap
V_{\Z}^{\deg,(i)},X)\right) + \sum_{d>T} O_{\e}(X/d^{2-\e})\\ &=
\sum_{d<T} \left(c_i\mu(d)c_d X - \mu(d)N^*_{12}(W_d\cap
V_{\Z}^{\deg,(i)},X)\right)+O(X/T^{1-\e}+X^{\frac{39}{40}}T^{3+\e})\\ &= \sum_{d\in\N}
c_i\mu(d)c_d X
+O_{\e}(X/T^{1-\e}+X^{\frac{39}{40}}T^{3+\e}+ X^{\frac{199}{200}}T^{1+\e})\\ &= c_i\prod_p (1-c_p)X + O_{\e}(X/T^{1-\e}+X^{\frac{39}{40}}T^{3+\e}+X^{\frac{199}{200}}T^{1+\e}), \\
\end{align*}
where the second equality follows from \eqref{cutoff} and Lemma \ref{uniform}, and the fourth equality follows from \eqref{finest}.
Optimizing, we pick $T=X^{\frac{1}{400}}$ to obtain Theorem \ref{main}.


\begin{thebibliography}{10}

\bibitem{B} J.\ Brakenhoff, Counting problem for number rings, Ph.D
  thesis, Lieden University, 2009.

\bibitem{BBP} K.\ Belabas, M.\ Bhargava, and C.\ Pomerance, Error terms for the
Davenport-Heilbronn theorems, {\it Duke Math. J.} {\bf 153} (2010), 173--210.

\bibitem{dodqf}
M.\ Bhargava, The density of discriminants of quartic rings and
fields, {\it Ann. of Math.} {\bf 162}, 1031--1063.


\bibitem{quintic} M.\ Bhargava, Higher composition laws IV. The
  parametrization of quintic rings. {\it Ann. of Math. $($2$)$} {\bf
    167} (2008), no. 1, 53–94.

\bibitem{dodpf}
M.\ Bhargava, The density of discriminants of quintic rings and
fields, {\it Ann.\ of Math. $($2$)$}, {\bf 172} (2010), no. 3, 1559--1591.


\bibitem{Bhyper} M.\ Bhargava, Most hyperelliptic curves over Q have
  no rational points, arxiv/1308.0395.

\bibitem{BG3} M.\ Bhargava and B.\ Gross, The average size of the 2-{S}elmer gro
up of {J}acobians of hyperelliptic curves having a rational {W}eierstrass point
(2012), arXiv/1208.1007.

\bibitem{BS}
M.\ Bhargava and A.\ Shankar, Binary quartic forms having bounded
invariants, and the boundedness of the average rank of elliptic
curves, preprint.


\bibitem{BS5} M.\ Bhargava and A.\ Shankar, The average number of
  elements in the $5$-Selmer group of elliptic curves, in preparation.

\bibitem{IK}
H.Iwaniec and E.Kowalski, Analytic Number Theory,
{\it Amer. Math. Soc. Colloq. Publ.}, vol. 53, Amer. Math. Soc., Providence, RI (2004).


\bibitem{SW} A.\ Shankar and X.\ Wang, The average size of the
  $2$-Selmer group for monic even hyperelliptic curves,
  arXiv/1307.3531.

\bibitem{Y} A.\ Yang, Distribution problems associated to zeta
  functions and invariant theory, Ph.D.\ thesis, Princeton University, 2009.


\end{thebibliography}
\end{document}